\NewDocumentCommand{\rot}{O{20} O{1em} m}{\makebox[#2][l]{\rotatebox{#1}{#3}}}%
\newtheoremstyle{mythm} 
{6pt}
{6pt}
{\it}
{}
{\bf}
{.}
{.5em}
{}
\newtheoremstyle{mydef}
{6pt}
{6pt}
{}
{}
{\bf}
{.}
{.5em}
{}
\newtheoremstyle{myrem}
{6pt}
{6pt}
{}
{}
{\bf}
{.}
{.5em}
{}
\newtheoremstyle{myex}
{6pt}
{3ex}
{}
{}
{\bf}
{.}
{.5em}
{}
\theoremstyle{mythm}
\newtheorem{theorem}{Theorem}[section]
\newtheorem{lemma}[theorem]{Lemma}
\theoremstyle{mydef}
\newtheorem{definition}[theorem]{Definition}
\theoremstyle{myrem}
\newtheorem{remark}[theorem]{Remark}
\numberwithin{equation}{section}
\theoremstyle{myex}
\newtheorem{example}[theorem]{Example}
\newcommand{\Aut}{{\rm Aut}}
\newcommand{\opp}{{\nu}}
\newcommand{\clust}{\mathcal{C}}
\newcommand{\Hom}{{\rm Hom}}
\newcounter{ithmcount}
\newenvironment{items}{
\begin{list}{$\alph{item})$}
{\labelwidth30pt \leftmargin30pt \topsep3pt \itemsep2pt \parsep0pt}}
{\end{list}}
\newenvironment{iprfwithoutbox}{\begin{list}{{\rm
	\alph{ithmcount})}}{\usecounter{ithmcount}\labelwidth-5pt
      \leftmargin0pt \topsep3pt \itemsep1pt \parsep2pt}}{\end{list}}
\renewcommand{\leq}{\leqslant}
\renewcommand{\geq}{\geqslant}
\begin{document}


\title{Generation of finite groups with cyclic Sylow subgroups}
 
\author[H.~Dietrich]{Heiko Dietrich}
\email{heiko.dietrich@monash.edu}
\author[D.\ Low]{Darren Low}
\email{darrenlow@protonmail.com}
\address{School of Mathematics, Monash University, VIC 3800, Australia}
\thanks{The first author was supported by an Australian Research Council grant,
identifier DP190100317. This work forms part of the Honours thesis \cite{dlh} of the second author.}

\begin{abstract}
  Slattery (2007) described computational methods to enumerate, construct, and identify finite groups of squarefree order. We generalise Slattery's result to the class of finite groups that have cyclic  Sylow subgroups and provide an implementation for the computer algebra system~GAP.   
\end{abstract} 

\maketitle


\section{Introduction}\label{s:1}
\noindent A finite group is squarefree if its order is not divisible by any prime-square. H\"older \cite{hoelder} classified and enumerated these groups at the end of the 19th century. More than 100 years later, Slattery \cite{sf} devised powerful algorithms for constructing and identifying groups of squarefree order. By describing a \emph{canonical ordering} for the (isomorphism types of) groups of squarefree order~$n$,  Slattery's methods allow the direct computation of the $i$-th group of order $n$, and, for a given squarefree group of order $n$, the determination of its position in that ordered list, both tasks without the need to compute the full list of groups. Slattery's algorithms are implemented for the computer algebra system Magma \cite{magma}; the SmallGroups Library (based on \cite{sgd}) of the computer algebra system GAP \cite{gap} contains alternative implementations for squarefree groups.  Clearly, a group of squarefree order has cyclic Sylow subgroups. The aim of this note is to generalise  \cite{sf} to the class of C-groups, that is, to finite groups having cyclic Sylow subgroups. 

\subsection{Main results} 
For a positive integer $n$ denote by $C(n)$ the number of (isomorphism types) of C-groups of order $n$; a formula for  $C(n)$ has been determined by Murty \& Murty \cite{murty}, see Section~\ref{secCGroups}. Our first main result is a complete generalisation of the results in \cite{sf} to C-groups; specifically, for a positive integer $n$ we   describe algorithms for the following:  
\begin{items}
\item[(A)] Construct a sorted list $\mathcal{C}_n$ of all (isomorphism types of) C-groups of order $n$.
\item[(B)] For $i\in\{1,\ldots,C(n)\}$ construct the $i$-th group in $\mathcal{C}_n$ without creating the whole list.
\item[(C)] For a C-group of order $n$, determine its position $i$ in $\mathcal{C}_n$ without creating the whole list.
\end{items}
This functionality furnishes every  C-group with a unique ID $(n,i)$, meaning that it is isomorphic to the $i$-th group in $\mathcal{C}_n$, and this ID is an isomorphism invariant. This has two advantages. Firstly, it allows us to efficiently reduce a given list of C-groups up to isomorphism, by simply computing and comparing IDs. Secondly, it provides a dynamic database of C-groups that allows a direct construction of groups without the necessity to store all groups. Our second main result is an implementation for GAP. Even though our implementation started out  as a proof of concept, it is remarkably efficient: restricted to squarefree groups, our implementations seem on average faster than what is provided by GAP's SmallGroups Library: we even found squarefree group orders for which our code runs more than 800 times faster than the latter, see Table \ref{tabSQ}. In GAP, our groups are constructed as PC-groups (polycyclic groups), but our ID function also accepts other input formats such as permutation or matrix groups.

We proceed as follows: In Section~\ref{secCGroups} we discuss relevant results for C-groups. In Section~\ref{secConst} we describe the theory for  (A), (B), and (C). In Section~\ref{secGAP} we illustrate the capabilities of our GAP implementation.


\section{C-groups}\label{secCGroups}
\noindent We present some known and preliminary  results for C-groups. To the best of our knowledge, the term \emph{C-group} was coined by Murty \& Murty~\cite{murty}; some authors call them Z-groups, referencing  Zassenhaus' classification.

\subsection{Structure results} 
All groups are finite and, throughout, $C_k$ denotes a cyclic group of size~$k$. Results of H\"older, Burnside, and Zassenhaus \cite[(10.1.10)]{rob} show that every C-group $G$ of order $n$ is metacyclic with odd-order derived subgroup $G'\cong C_b$ and cyclic quotient $G/G'$ of order $a=n/b$; specifically,  $G$  is isomorphic to
\begin{eqnarray}\label{eqGedr}G_{a,b,r} = \langle x,y \mid x^a, y^b, y^x=y^r\rangle
\end{eqnarray}
for some $0\leq r<b$ with $r^a\equiv 1\bmod b$ and $\gcd(a(r-1),b)=1$. Conversely, every such $G_{a,b,r}$ is a C-group of order $ab$ with derived subgroup isomorphic to $C_b$; note that $G_{a,b,r}$ is abelian if and only if $b=1$ and $r=0$. Murty \& Murty have shown in \cite[Theorem 1.1]{murty} that the number (of isomorphism types) of C-groups of order $n$ is 
\begin{eqnarray}\label{eqMurty}C(n)=\sum\nolimits_{n=de \atop \gcd(d,e)=1} \prod\nolimits_{p^\alpha|| d\atop p\text{ prime}} \left(
\sum\nolimits_{j=1}^{\alpha} \frac{(p^{\opp(p^j,e)} - p^{\opp(p^{j - 1}, e})}
                               {p^{j-1}(p-1)}\right)
\end{eqnarray} where \[p^{\opp(p^j, e)} = \prod\nolimits_{q\mid e \atop q\text{ prime}} \gcd (p^j, q - 1)\] and $p^\alpha|| d$ denotes the largest $p$-power dividing $d$,  which we also write as  $d_p=p^\alpha$.

\begin{lemma}\label{lemIsom}Two groups $G_{a,b,r}$ and $G_{a',b',r'}$ are isomorphic if and only if $a=a'$, $b=b'$, and $r^\alpha\equiv r'\bmod b$ for some $\alpha$ coprime to $a$.
\end{lemma}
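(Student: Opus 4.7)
The necessity part starts by extracting numerical invariants from a hypothetical isomorphism $\phi: G_{a,b,r}\to G_{a',b',r'}$. Comparing orders gives $ab=a'b'$, and since (as recalled in the paragraph preceding the lemma) the derived subgroup of $G_{a,b,r}$ is cyclic of order $b$, the equalities $b=b'$ and hence $a=a'$ follow from $\phi(G_{a,b,r}')=G_{a',b',r'}'$. Writing $x',y'$ for the standard generators of $G_{a,b,r'}$, the inclusion $\phi(\langle y\rangle)\subseteq\langle y'\rangle$ forces $\phi(y)=y'^{s}$ for some $s$ with $\gcd(s,b)=1$, and modding out the derived subgroup shows $\phi(x)\in x'^{\alpha}\langle y'\rangle$ for some $\alpha$ coprime to $a$; say $\phi(x)=x'^{\alpha}y'^{t}$.

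Next I would apply $\phi$ to the defining relation $y^{x}=y^{r}$. Because $y'^{s}$ commutes with $y'^{t}$, the left-hand side simplifies to $(y'^{s})^{x'^{\alpha}}=y'^{s(r')^{\alpha}}$, while the right-hand side is $y'^{sr}$. Using $\gcd(s,b)=1$ this yields $(r')^{\alpha}\equiv r\pmod{b}$ with $\gcd(\alpha,a)=1$. To put this in the form stated in the lemma, I would note that the multiplicative order $d$ of $r'$ modulo $b$ divides $a$ (since $(r')^{a}\equiv 1$), so $\alpha$ is invertible modulo $d$; choose $\beta$ with $\alpha\beta\equiv 1\pmod d$, and adjust $\beta$ modulo $d$ by a CRT argument so that $\gcd(\beta,a)=1$. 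Then $r^{\beta}\equiv(r')^{\alpha\beta}\equiv r'\pmod b$, giving the desired equivalence.

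For sufficiency, assume $a=a'$, $b=b'$ and $r^{\alpha}\equiv r'\pmod b$ with $\gcd(\alpha,a)=1$. I would define $\phi$ on generators by $\phi(x)=x'^{\alpha}$ and $\phi(y)=y'$ and verify the three relations of $G_{a,b,r}$ in $G_{a,b,r'}$: the relation $x^{a}=1$ holds because $a\mid a\alpha$; the relation $y^{b}=1$ is immediate; and $\phi(y)^{\phi(x)}=y'^{x'^{\alpha}}=y'^{(r')^{\alpha}}$ must equal $\phi(y)^{r}=y'^{r}$, which is exactly the hypothesis $r\equiv(r')^{\alpha}\pmod b$ (obtained from $r^{\alpha}\equiv r'\pmod b$ by the same inversion trick as above). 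By von Dyck's theorem $\phi$ extends to a group homomorphism, and it is surjective because $\gcd(\alpha,a)=1$ makes $\langle x'^{\alpha}\rangle=\langle x'\rangle$, so $\phi(x),\phi(y)$ generate $G_{a,b,r'}$; equality of orders then upgrades surjectivity to an isomorphism.

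The main obstacle I anticipate is the bookkeeping around the two formally distinct conditions ``$(r')^{\alpha}\equiv r$'' and ``$r^{\alpha}\equiv r'$'', both with $\alpha$ coprime to $a$. Everything else is straightforward manipulation of generators and relations in a metacyclic presentation, but one must be careful that the exponent obtained by inverting $\alpha$ modulo $\mathrm{ord}(r')$ can genuinely be chosen coprime to $a$, which is where the divisibility $\mathrm{ord}(r')\mid a$ together with a CRT adjustment is essential.
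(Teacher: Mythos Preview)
Your argument is correct. The paper's own proof is much shorter: it observes, as you do, that $b=|G'|$ and $a=|G/G'|$ are isomorphism invariants, and then simply cites \cite[Lemma~3.6]{murty} for the congruence condition on $r$ and $r'$. By contrast, you supply a self-contained verification: you track the images of the generators under an isomorphism to obtain $(r')^{\alpha}\equiv r\pmod b$ with $\gcd(\alpha,a)=1$, and then use the divisibility $\mathrm{ord}_b(r')\mid a$ together with a CRT adjustment to rewrite this as $r^{\beta}\equiv r'\pmod b$ with $\gcd(\beta,a)=1$; the sufficiency direction is handled by an explicit homomorphism checked via von~Dyck. What your approach buys is independence from the external reference and an explicit description of the isomorphism; what the paper's citation buys is brevity. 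One small presentational point: in your sufficiency paragraph you define $\phi(x)=x'^{\alpha}$ and only afterwards invoke the inversion trick to get the needed congruence $(r')^{\alpha}\equiv r$, which really produces a \emph{different} exponent~$\beta$; it would be cleaner to first invert (obtaining $\beta$ coprime to $a$ with $(r')^{\beta}\equiv r$) and then set $\phi(x)=x'^{\beta}$.
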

\begin{proof}
If $G\cong G_{a,b,r}$, then $|G'|=b$ and $|G/G'|=a$, so $a$ and $b$ are uniquely determined by the isomorphism type of $G$. Now  \cite[Lemma 3.6]{murty} proves the claim.
\end{proof} 

\begin{remark}\label{remBF}The description in \cite[(10.1.10)]{rob} and Lemma \ref{lemIsom} allow, in principle,  a brute-force construction of all C-groups of order $n$ up to isomorphism and, for a given C-group $G$, a determination of parameters $a,b,r$ such that $G\cong G_{a,b,r}$.  However, from the perspective of complexity theory, these are \emph{difficult} tasks, and this is reflected in the practical performance of the brute-force approach. Iliopulous \cite{Iliopoulos} demonstrated that even for abelian black-box groups\footnote{See \cite[Section~2]{Seress} for an introduction to the black-box model of computation.} most problems are at least as hard as the discrete logarithm problem and 
integer factorisation (neither of those problems seems to have  a polynomial time 
deterministic or randomised solution); this includes the problem of finding a generator of a cyclic group given by some generating set, see \cite[Proposition~2.5]{Iliopoulos}, which is relevant when determining the parameter $b$ for $G\cong G_{a,b,r}$. Even if a group is given by a polycyclic presentation, which is the standard way in computational group theory to describe finite solvable groups, proving that its group multiplication  has  a favourable complexity is already difficult because of the challenges involving \emph{collection}, see \cite{NNcomp}.   
\end{remark}

For the reasons given in Remark \ref{remBF}, we ignore complexity issues here, and we refer to \cite{most} for a recent discussion of the complexity of (black-box) isomorphisms tests. Instead, as in \cite{sf}, we assume that we can factorise integers and perform basic  computations with groups, such as constructing group orders, Sylow subgroups, quotient groups, cyclic generators, etc; we refer to the Handbook of Computational Group Theory \cite{handbook} for details on how to compute with groups. Analogous to \cite{sf}, our approach to construct and identify C-groups with $|G'|=d$ and $|G/G'|=e$ is to iterate  over the prime-power divisors of $d$ and $e$; in practice, this improves significantly over the brute-force approach (Remark~\ref{remBF}), cf.\ the computational evidence in Section~\ref{secGAP}.

\subsection{Permissible sets and clusters}

The next two definitions and following lemma generalise \cite[Definitions 4 and 7, Lemma 5]{sf} and is motivated by \cite[p.\ 302]{murty}.

\begin{definition}
  Let $G$ be a C-group and let  $q>p$ be prime divisors  of $|G|$. We say \emph{$p$ acts on $q$ (in $G$ with exponent $e$)} if a Sylow $p$-subgroup of $G$ acts via conjugation on a Sylow $q$-subgroup of $G$, and this action describes an automorphism of order $p^e\ne 1$.
\end{definition}
 
\begin{remark}\label{remPowers}
 Let $q>2$ be a prime and $m\geq 1$ an integer, so that the automorphism group of  $Q=\langle y\rangle\cong C_{q^m}$ is cyclic of order $q^{m-1}(q-1)$, generated by an automorphism $y\mapsto y^r$, where $r$ is a primitive root modulo $q^m$. For every divisor $a$ of the automorphism group order, we now describe a canonical ordering of the automorphisms of $Q$ of order $a$. For this we  choose $r\in\{2,\ldots,q^m-1\}$ as small as possible and for $k\geq 1$ define $\alpha_k\colon Q\to Q$, $y\mapsto y^{t_k}$, where $t_k=r^{kq^{m-1}(q-1)/a}\bmod q^m$. The automorphisms of $Q$ of order $a$ can now be ordered  as $\{\alpha_k : k\in\{1,\ldots,a-1\}\text{ coprime to $a$}\}$.
\end{remark}

If a group $A$ acts on a group $B$ via $\varphi\colon A\to\Aut(B)$, then  $A\ltimes_\varphi B$ denotes the corresponding split extension with underlying set $A\times B$ and multiplication $(a,b)(u,v)=(au,b^{\varphi(u)}v)$. We also write $A\ltimes B$ if the action is not specified or understood from the context.

\begin{lemma}\label{lemDivCond}
Every C-group $G$ decomposes as  $G=(A\ltimes D)\times C$ where $A\times C\cong G/G'$ and $D\cong G'$; the groups $A,C,D$ are cyclic of coprime orders, and $p$ divides $|A|$ if and only if $p$ acts on some $q$ in $G$ with exponent $e$; in this case,  $p^e$ divides $q-1$, and $q$ divides $|D|$.
\end{lemma}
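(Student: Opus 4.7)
The plan is to build the decomposition from the explicit presentation $G\cong G_{a,b,r}$ furnished by \eqref{eqGedr}. Set $D=\langle y\rangle\cong C_b$, which is $G'$, and $X=\langle x\rangle\cong C_a$; since $\gcd(a,b)=1$, $X$ is a complement to $D$ and $G=X\ltimes_\varphi D$, where $\varphi\colon X\to\Aut(D)$ is the conjugation action $y\mapsto y^r$. Thus $G=X\ltimes D$ with $X$ and $D$ cyclic of coprime orders.

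Next I would split $X$ according to which of its Sylow subgroups act faithfully. Because $X$ is cyclic, each prime-power factor $X_p$ is characteristic and $X=\prod_{p\mid a} X_p$. Collect the factors into
\[ A=\prod\nolimits_{p\in\pi_A} X_p, \qquad C=\prod\nolimits_{p\in\pi_C} X_p, \]
where $\pi_A$ consists of those primes $p\mid a$ for which $\varphi(X_p)\neq 1$ and $\pi_C$ is the complement. Then $X=A\times C$ as cyclic groups of coprime orders, and by construction $C$ acts trivially on $D$.

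To upgrade this to $G\cong (A\ltimes D)\times C$, I would check that $C$ is a direct factor: since $X$ is abelian, $C$ centralises $A$, and since $C$ acts trivially on $D$, $C$ also centralises $D$. Hence $C$ is central in $G$, while the subgroup $A\ltimes D$ is normal and meets $C$ trivially (orders are coprime). The multiplication map $(A\ltimes D)\times C\to G$ is therefore an injective homomorphism of groups of equal order $ab$, hence an isomorphism. Coprimality of $|A|,|C|,|D|$ follows from $\gcd(a,b)=1$ together with $\gcd(|A|,|C|)=1$.

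The equivalence in the second half is now bookkeeping. If $p\mid|A|$, then by definition $X_p$ acts nontrivially on $D=\prod_{q\mid b} D_q$, so it acts nontrivially on some Sylow factor $D_q\cong C_{q^m}$; the induced automorphism has $p$-power order $p^e\neq 1$, which is precisely the statement that $p$ acts on $q$ in $G$ with exponent $e$. Conversely, if some Sylow $p$-subgroup of $G$ acts nontrivially on a Sylow $q$-subgroup, then $X_p\not\subseteq C$, so $p\mid|A|$. Finally, since $\Aut(C_{q^m})$ has order $q^{m-1}(q-1)$ and $\gcd(p,q)=1$, the $p^e$-order automorphism must satisfy $p^e\mid q-1$, and $D_q\neq 1$ forces $q\mid b=|D|$. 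The only mildly delicate point is the direct-factor step: convincing oneself that collecting the trivially acting Sylow parts of $X$ yields a genuine central factor of $G$ (rather than just a subgroup of $X$) requires the abelianness of $X$ to commute $A$ with $C$, which is where one uses that C-groups have cyclic, hence abelian, $G/G'$.
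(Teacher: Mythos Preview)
Your approach is essentially the paper's: start from $G=X\ltimes D$ furnished by \eqref{eqGedr}, split $X=A\times C$ according to which Sylow factors of $X$ act nontrivially on $D$, observe that $C$ is central, and conclude $G=(A\ltimes D)\times C$. The forward direction of the biconditional and the divisibility $p^e\mid q-1$ are handled just as in the paper.

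There is, however, a real gap in the converse direction. You write ``if some Sylow $p$-subgroup of $G$ acts nontrivially on a Sylow $q$-subgroup, then $X_p\not\subseteq C$, so $p\mid|A|$'', and later ``$D_q\ne 1$ forces $q\mid b$''. Both steps silently assume that the relevant Sylow subgroups already sit where you want them: that the Sylow $p$-subgroup of $G$ is $X_p$ (i.e.\ $p\mid a$) and that the Sylow $q$-subgroup being acted on is $D_q$ (i.e.\ $q\mid b$). Neither is automatic, since $\gcd(a,b)=1$ only tells you each prime divides exactly one of $a$ or $b$; a priori $p$ could divide $b$, in which case $X_p=1$ and your inference collapses. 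The paper explicitly rules this out: if $p\mid|D|$ then $P$ is characteristic in $D\unlhd G$, hence normal in $G$, so $P$ and $Q$ normalise each other and $[P,Q]\le P\cap Q=1$, contradicting the nontrivial action. For $q\mid|D|$ one can argue similarly, or note that $[P,Q]\le Q\cap G'=Q\cap D$, which is trivial whenever $q\nmid|D|$. Once these two small arguments are inserted, your bookkeeping goes through.
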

\begin{proof}
  The classification \eqref{eqGedr} shows that $G=U\ltimes D$ where $U\cong G/G'$ and $D\cong G'$ are cyclic of coprime orders. Write $U=A\times C$ where $A$ is the direct product of those Sylow $p$-subgroups of $U$ that have $p$ acting on some $q$ in $G$, and $C$ is the direct product of the remaining Sylow subgroups of $U$. By construction,  $C\leq Z(G)$ and $A\ltimes D$ is a complement to $C$ in $G$.  Let $P$ be a Sylow $p$-subgroup acting on a Sylow $q$-subgroup $Q$ of $G$ with exponent $e$. If $Q$ has order $q^m$, then $\Aut(Q)$ is abelian of order $q^{m-1}(q-1)$; since $P$ acts with exponent $e$,  we have $p^e\mid q-1$.
  If $p$ divides $|D|$, then $P\leq D$, because the orders of $A$, $C$, and $D$ are coprime and  $D\unlhd G$. In particular, $P$ is the unique Sylow $p$-subgroup of $D\unlhd G$, and hence $P$ and $Q$ normalise each other. This implies that   $x^{-1}y^{-1}xy\in P\cap Q=\{1\}$ for all $x\in P$ and $y\in Q$, so $P$ centralises $Q$, a contradiction. Hence $p$ divides $|U|$. Since $C\leq Z(G)$ and $P$ acts nontrivially on $Q$, we have $p\mid |A|$ and $q\mid |D|$.\end{proof}

\begin{definition}
  Let $G=(A\ltimes D)\times C$ be  a C-group as in Lemma \ref{lemDivCond}. Its \emph{acting divisor} is $|A|$; its \emph{cluster} is the set $\clust(G)$ of all triples $(p,q,e)$ such that $p$ acts on $q$ in $G$ with exponent~$e$. The \emph{acting group} of $G$ is the cyclic subgroup of $\Aut(D)$ generated by the $A$-action on $D$.  A \emph{permissible set} for a group order $n>1$ is a set $\mathcal{P}$ of triples $(p,q,e)$ such that $q>p$ are primes dividing $n$, $p^e\ne 1$ divides $q-1$,  and if  $(p,q,e)\in\mathcal{P}$, then $(q,\ast,\ast),(\ast,p,\ast),(p,q,c)\notin \mathcal{P}$ for all $c\ne e$. 
\end{definition}

We conclude with a few remarks

\begin{remark}\label{remCluster}
  \begin{iprfwithoutbox}
  \item If $G$ is a C-group, then $\clust(G)$ is a permissible set for $|G|$ by Lemma~\ref{lemDivCond}. The acting divisor of $G$ is the product of all $|G|_p$ where $p$ runs over $\{p : (p,\ast,\ast)\in\clust(G)\}$; every such $p$ divides the size of the acting group of $G$.
Conversely, if $\mathcal{P}$ is a permissible set for a group order $n>1$, then there exists a C-group $G$ with $\clust(G)=\mathcal{P}$: let $n=p_1^{a_1}\ldots p_k^{a_k}q_1^{b_1}\ldots q_\ell^{b_\ell}r_1^{c_1}\ldots r_m^{c_m}$ be the prime power factorisation of $n$ and assume, without loss of generality, that $(p,\ast,\ast),(\ast,q,\ast)\in\mathcal{P}$ if and only if $p\in\{p_1,\ldots,p_k\}$ and $q\in\{q_1,\ldots,q_\ell\}$. A group of order $n$ with cluster $\mathcal{P}$ is\[G=(C_{p_1^{a_1}}\times \ldots\times C_{{p_k^{a_k}}})\ltimes (C_{{q_1}^{b_{1}}}\times\ldots\times C_{{q_\ell}^{b_\ell}})\times C_{r_1^{c_1}\ldots r_m^{c_m}}\]
where $C_{{p_i}^{a_i}}$ acts on $C_{{q_j}^{b_j}}$ via an automorphism of order $p_i^{e_{i,j}}$ if and only if $(p_i,q_j,e_{i,j})\in\mathcal{P}$.
\item If two $C$-groups $G$ and $\tilde G$ have equal orders and clusters, then  we can identify the decomposition sets in Lemma \ref{lemDivCond}, so the acting groups of $G$ and $\tilde G$ are both subgroups of  $\Aut(D)=\Aut(\tilde D)$. Using this identification, it follows with Lemma \ref{lemIsom} and \cite[Lemma 6.14b)]{most} that two C-groups $G$ and $\tilde G$ are isomorphic if and only if they have the same order, clusters, and acting groups. 
\item If $G$ is a C-group with largest prime divisor $q$, then the Sylow $q$-subgroup $Q$ of $G$ is normal, and $G=H\ltimes Q$ for some C-group $H<G$ by the Schur-Zassenhaus theorem \cite[(9.1.2)]{rob}. By induction, $G$ is an iterated split extension of Sylow subgroups of increasing primes, that is, $G$ has a split Sylow tower.
  \end{iprfwithoutbox}
\end{remark}

\subsection{Isomorphic split extensions}

Motivated by Remark \ref{remCluster}c), we require the following. 
\begin{lemma}\label{lemIsoExt}Let $H$ and $Q$ be groups of coprime order. Let $\sigma,\omega\colon H\to \Aut(Q)$ be two group actions and suppose that $Q$ and $\Aut(Q)$ are abelian. Then $H\ltimes_\sigma Q$ and $H\ltimes_\omega Q$ are isomorphic if and only if there exists $\alpha\in\Aut(H)$ with  $\omega=\sigma^{\alpha}$ where $\sigma^{\alpha}(h)=\sigma(h^{\alpha^{-1}})$ for $h\in H$.
\end{lemma}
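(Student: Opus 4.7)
The plan is to prove the two implications separately. The backward direction is handled by writing down an explicit isomorphism and checking it directly; the forward direction requires first replacing an abstract isomorphism by a ``normalised'' one (via Schur-Zassenhaus) that respects the decomposition, after which the relation between $\sigma$ and $\omega$ can be read off.

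For $(\Leftarrow)$, assume $\omega=\sigma^{\alpha}$, that is, $\omega(h)=\sigma(h^{\alpha^{-1}})$ for all $h\in H$. I will define $f\colon H\ltimes_\omega Q\to H\ltimes_\sigma Q$ by $f(h,q)=(h^{\alpha^{-1}},q)$. Using the multiplication rule $(a,b)(u,v)=(au,b^{\varphi(u)}v)$, the verification that $f((h_1,q_1)(h_2,q_2))=f(h_1,q_1)f(h_2,q_2)$ reduces to two identities: $(h_1h_2)^{\alpha^{-1}}=h_1^{\alpha^{-1}}h_2^{\alpha^{-1}}$, which holds because $\alpha^{-1}$ is a homomorphism, and $\omega(h_2)=\sigma(h_2^{\alpha^{-1}})$, which is the hypothesis. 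Bijectivity is immediate since $\alpha^{-1}\in\Aut(H)$.

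For $(\Rightarrow)$, let $\Phi\colon H\ltimes_\omega Q\to H\ltimes_\sigma Q$ be an isomorphism. Because $\gcd(|H|,|Q|)=1$, the subgroup $Q$ is the unique normal Hall $\pi$-subgroup (for $\pi$ the set of primes dividing $|Q|$) in each group, so $\Phi(Q)=Q$. By the Schur-Zassenhaus theorem, applicable since $Q$ is abelian and hence solvable, the complement $\Phi(H\times\{1\})$ to $Q$ in $H\ltimes_\sigma Q$ is conjugate to $H\times\{1\}$ by some $(1,q_0)\in Q$. Replacing $\Phi$ by $\Psi(g)=(1,q_0)^{-1}\Phi(g)(1,q_0)$ yields an isomorphism with $\Psi(H\times\{1\})=H\times\{1\}$ and $\Psi(Q)=Q$, so there exist $\alpha\in\Aut(H)$ and $\beta\in\Aut(Q)$ with $\Psi(h,q)=(h^\alpha,\beta(q))$. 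Applying $\Psi$ to the identity $(h,1)^{-1}(1,q)(h,1)=(1,\omega(h)(q))$ in $H\ltimes_\omega Q$ and computing the analogous conjugation in $H\ltimes_\sigma Q$ gives $\beta\circ\omega(h)=\sigma(h^\alpha)\circ\beta$ in $\Aut(Q)$. Because $\Aut(Q)$ is abelian, $\beta$ cancels and $\omega(h)=\sigma(h^\alpha)=\sigma^{\alpha^{-1}}(h)$, so $\omega=\sigma^{\alpha^{-1}}$ with the desired automorphism being $\alpha^{-1}\in\Aut(H)$.

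The main obstacle is the normalisation step in the forward direction: from an arbitrary $\Phi$ one cannot directly extract a well-defined $\alpha\in\Aut(H)$, so one must first conjugate $\Phi$ by an element of $Q$ to preserve $H\times\{1\}$. This relies crucially on coprime orders (to force $\Phi(Q)=Q$ via uniqueness of the normal Hall subgroup) and on the solvability of $Q$ (for the conjugacy of complements in Schur-Zassenhaus). Once $\Psi$ is in the ``diagonal'' form $\Psi(h,q)=(h^\alpha,\beta(q))$, the remaining calculation is routine, with the abelianness of $\Aut(Q)$ used at exactly one place, to cancel $\beta$ in the conjugation relation.
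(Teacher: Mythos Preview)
Your proof is correct. The backward direction matches the paper's exactly (modulo swapping source and target). In the forward direction you take a genuinely different route: you invoke the conjugacy part of Schur--Zassenhaus to normalise the isomorphism so that it carries $H\times\{1\}$ to $H\times\{1\}$, reducing to a diagonal map $(h,q)\mapsto(h^\alpha,\beta(q))$; the paper instead writes an arbitrary isomorphism in the form $(h,n)\mapsto(h^\alpha,h^\tau n^\beta)$ with an unspecified twist $\tau\colon H\to Q$, expands the homomorphism condition, and observes that the resulting relation $n^{\sigma(h)\beta}=n^{\beta\omega(h^\alpha)}$ already gives $\sigma(h)=\omega(h^\alpha)$ once $\Aut(Q)$ is abelian, so $\tau$ never needs to be controlled. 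Your approach buys a cleaner final computation at the cost of importing the conjugacy of complements; the paper's is more self-contained and shows directly that the cocycle-like term $\tau$ is irrelevant. Both use the abelianness of $\Aut(Q)$ at the same decisive moment, to cancel the inner automorphism by $\beta$.
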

\begin{proof} Write $E_{1} = H\ltimes_{\sigma}Q$ and $E_{2}= H\ltimes_{\omega} Q$ and recall that $E_1$ and $E_2$ share the same underlying set $H\times Q$.   First, suppose  $\omega = \sigma^\alpha$, that is, $\omega(h^{\alpha}) = \sigma(h)$ for all $h\in H$. A straightforward calculation shows that  $E_{1}\rightarrow E_{2}$, $(h,n)\mapsto (h^\alpha, n)$, is an isomorphism. Conversely, suppose $\beta \colon E_{1}\rightarrow E_{2}$ is an isomorphism. Since $Q$ is a characteristic Sylow $q$-subgroup in both $E_{1}$ and $E_{2}$, there is a map $\tau\colon H\to Q$ such that $\beta$ has the form $(h,n)^\beta=(h^\alpha, h^\tau n^\beta)$, where $\alpha=\beta|_{E_1/Q}$ is the induced automorphism of $H$. Since $\beta$ is an isomorphism, a calculation shows that $(gh)^\tau=g^{\tau\omega(h^\alpha)}h^\tau$ and  $n^{\sigma(h)\beta}=n^{\beta\omega(h^\alpha)}$ for all $n\in Q$ and $g,h\in H$. Since $\Aut(Q)$ is abelian,  $\sigma(h)=\omega(h^\alpha)$ for all $h\in H$, as claimed. 
\end{proof} 

\begin{remark}\label{remInduced}
  We apply Lemma \ref{lemIsoExt} to the set-up of Remark \ref{remCluster}c), that is, $H$ is a C-group and $Q$ is a cyclic group. In order to describe all split extensions $H\ltimes Q$ up to isomorphism, one has to compute the $\Aut(H)$-orbits on $\Hom(H,\Aut(Q))$. Since $\Aut(Q)$ is abelian, every such homomorphism  has $H'$ in its kernel, and we can identify \[\Hom(H,\Aut(Q))=\Hom(H/H',\Aut(Q))=\bigoplus\nolimits_{j=1}^{\ell}\Hom(S_j,\Aut(Q)),\] where $S_1,\ldots,S_\ell$ are the Sylow subgroups of $H/H'$. Note that  each  $\Hom(S_j,\Aut(Q))$ is fixed by $\Aut(H)$. It follows that if $\sigma,\omega\in\Hom(H,\Aut(Q))$ are in the same $\Aut(H)$-orbit, then  for each $j$ the induced actions $\sigma_{j},\omega_{j}\in\Hom(S_j,\Aut(Q))$ are in the same $\Aut(H)$-orbit.
\end{remark}

\section{Construction of C-groups}\label{secConst}
\noindent We describe how to construct, up to isomorphism, all C-groups of order $n>1$; recall from \eqref{eqMurty} that there are $C(n)$  such groups. This construction will be \emph{canonical}, that is, every C-group $G$ of order $n$ has a unique, isomorphism-invariant ID $(n,i)$, meaning that  $G$ is isomorphic to the $i$-th group in that list. In Section \ref{secConstID} we describe the direct construction of the group with ID $(n,i)$;  in Section \ref{secId} we describe the converse and determine the ID of a given C-group. To facilitate these constructions, we first comment on a canonical ordering of permissible sets.

\subsection{Murty's counting formulas}
Let $n=p_1^{a_1}\ldots p_k^{a_k}$ be the prime-power factorisation of $n$ with $p_1<\ldots<p_k$. Let $\mathcal{D}(n)$ be the set of possible acting divisors of C-groups of order $n$; these are the $2^{k-1}$ numbers $p_{i_1}^{a_{i_1}}\ldots p_{i_m}^{a_{i_m}}$ where $\{i_1,\ldots,i_m\}$ runs over all subsets of $\{1,\ldots,k-1\}$. We sort them as $d_1<\ldots< d_{2^{k-1}}$. For $d\in\mathcal{D}(n)$ denote by $C_d(n)$ the number of (isomorphism types) of C-groups of order $n$ with acting divisor $d$; by \eqref{eqMurty} and \cite[pp.\ 303--304]{murty}, we have
\[C_d(n)= \prod\nolimits_{p^\alpha|| d \atop p\text{ prime}} \left( 
\sum\nolimits_{j=1}^{\alpha} \frac{(p^{\opp(p^j,n/d)} - p^{\opp(p^{j - 1}, n/d})}
                               {p^{j-1}(p-1)}\right).
                               \]
                               Moreover, let $C_{d,m}(n)$ be the number of (isomorphism types) of C-groups of order $n$ with acting divisor $d$ and acting group order $m$; then \cite[p.\ 303]{murty} shows that
\[C_{d,m}(n)= \prod\nolimits_{p^j|| m \atop p\text{ prime}}\left( \frac{(p^{\opp(p^j,n/d)} - p^{\opp(p^{j - 1}, n/d})}
                               {p^{j-1}(p-1)}\right).
                               \]
Recall that if $d$ is the acting divisor and $m$ is the size of the acting group, then $m\mid d$ and, if $p\mid d$ is a prime, then $p\mid m$. This restricts the orders of possible  acting groups.
 
\subsection{Ordering of permissible sets}\label{secSortCluster} A permissible set $\mathcal{P}$ is a cluster for a C-group of order $n$ with acting divisor $d$ and acting group order $m$ if and only if for every $p^j|| m$ there exists $(p,*,j)\in \mathcal{P}$, but $(p,*,i)\notin\mathcal{P}$ when $i>j$. For every such $(p,*,j)$ we note which prime divisors $q$ of $n/d$ may be acted upon by $p$ with maximal exponent $e$, that is, $p^e=\gcd(p^j,q-1)\neq1$. Having done this, we are able to sort such permissible sets lexicographically, with four tiers of ordering:  
\begin{enumerate} 
\item compare the acting primes;
\item for a fixed acting prime $p$, compare the first prime $q_{\max}$ it acts on with maximal exponent;
\item for a fixed acting prime $p$ and $q_{\max}$, compare the other primes $p$ acts on;
\item for a fixed acting prime $p$ acting on $q\ne q_{\max}$, compare the exponents of that action.
\end{enumerate}
\begin{example}We sort the permissible sets for $n=2^2.3.5.13$ with $d=m=4$. Only $2$ can act and, as $m=4$, it has to act at least once with exponent 2. Clusters containing  $(2,5,2)$ (case ``$q_{\text{max}}=5$'') come before those containing $(2, 13, 2)$ but not $(2, 5, 2)$ (``$q_{\text{max}}=13$''). We obtain the following sorted list:  $\{(  2,  5,  2 )\}$,\;\; $\{(   2,   5,   2 ), (   2,  13,   1 )\}$,\;\; $\{(2,   5,   2), (  2,  13,   2 )\}$,\;\; $\{(  2,  3,  1 ),(  2,  5,  2 )\}$,\;\; $\{(  2,   3,   1 ), (   2,   5,   2 ),(   2,  13,   1 )\}$,\;\; $\{(2,   3,   1 ),(   2,   5,   2 ),(   2,  13,   2 )\}$,\;\; $\{( 2,  13,   2 )\}$,\;\; $\{(   2,   3,   1 ),(   2,  13,   2 )\}$,\;\; $\{(   2,   5,   1 ),(   2,  13,   2 )\}$,\;\; $\{( 2,   3,   1 ),(   2,   5,   1),(   2,  13,   2 )\}$. The triples within a fixed permissible set are simply arranged first by the primes which act, then by the primes which are acted on.
\end{example}
 
\subsection{Generating groups with a given cluster}\label{secGenP} Now that we have imposed a canonical ordering on the set of all permissible sets of a given group order, it remains to canonically sort the (isomorphism types of) groups that correspond to such data. We describe this process in the following. Throughout, let $\mathcal{P}$ be a permissible set for a group order $n$, with acting divisor $d$ and acting group order $m$. We factorise  $n=p_1^{a_1}\ldots p_k^{a_k}$ with $p_1<\ldots<p_k$.
 
We use the extension procedure described in Remark \ref{remCluster}c) to construct all C-groups $G$ of order $n$ with $\clust(G)=\mathcal{P}$. Recall that every such $G$ has a split Sylow tower $T_1\ltimes \ldots \ltimes T_k$, where each $T_i$ is a Sylow $p_i$-subgroup. By induction, we can assume that we have constructed all (isomorphism types) of C-groups $H$ of order $p_1^{a_1}\ldots p_{v-1}^{a_{v-1}}$ with cluster $\clust(H)= \{(p,q,c)\in \mathcal{P} : p,q < p_{v}\}$; the induction base case is   $H=T_1=C_{p_1^{a_1}}$. We now have to consider all split extensions $H\ltimes Q$ with $Q\cong C_{p_v^{a_v}}$ whose cluster is induced by $\mathcal{P}$. This requires a case distinction involving \[\mathcal{A}=\{p :  (p,p_v,*)\in\mathcal{P}\},\]the set of primes $p\in\{p_1,\ldots,p_{v-1}\}$ that act on $p_v$ in $\mathcal{P}$. 

If $\mathcal{A}=\emptyset$, then the only split extension we have to consider is the direct product $H\times Q$.\linebreak Now suppose $\mathcal{A}=\{p_{i_1},\ldots,p_{i_a}\}\ne\emptyset$, that is, $\mathcal{P}$ contains triples $(p_{i_j},p_v,e_j)$ for each $j$. For each $j$, let $P_j=\langle x_j\rangle$ be a Sylow $p_{i_j}$-subgroup of $H$; note that $P_j\not\leq H'$ since $P_j$ acts nontrivially, see Lemma~\ref{lemDivCond}. We need to consider actions $\sigma\colon H\to \Aut(Q)$ that map each $x_j$ to an automorphism of $Q$ of order $p_{i_j}^{e_j}$. To construct all  corresponding extensions $H\ltimes_\sigma Q$ up to isomorphism, Remark~\ref{remInduced} shows that one has to determine canonical $\Aut(H)$-orbit representatives of suitable $P_j$-actions on $Q$, for each $j$; here ``canonical'' refers to the description in Remark~\ref{remPowers}. Note that in order to have a well-defined $\Aut(H)$-action, here we consider $P_j\cong P_j H'/H'$ as a Sylow $p_{i_j}$-subgroup of $H/H'$, see Remark \ref{remInduced}. The description below shows that the $\Aut(H)$-action on group actions $H\to\Aut(Q)$ can be considered separately for each prime in $\mathcal{A}$. 

Thus, in the following let $p\in\mathcal{A}$, say $(p,p_v,e)\in\mathcal{P}$, and let $P=\langle x\rangle$ be a Sylow $p$-subgroup of $H$. Note that  since $p$ acts in $\mathcal{P}$, it is not acted upon in $\mathcal{P}$. We need another case distinction.

If $p_v$ is the smallest prime that $p$ acts on in $\mathcal{P}$, then $p$ does not act in $H$, hence $P\leq H$ is central, and so $H=U\times P$ for some complement $U$. In particular, $\Aut(H)=\Aut(U)\times\Aut(P)$, so $\Aut(H)$ acts as $\Aut(P)$ on $P$. Since $\Aut(P)$ acts transitively on the generators of $P$, it follows that there is a unique $\Aut(H)$-orbit of nontrivial $P$-actions on $Q$ with exponent $e$.

If $p_v$ is not the smallest prime that $p$ acts on in $\mathcal{P}$, then $p$ acts on some prime $p_w$ in $H$, say with exponent $c_w$. Let $ S = \langle y \rangle$ be the Sylow $p_w$-subgroup of $H$; recall that $S\unlhd H'$ and so $S$ is characteristic in $H$.  For every $\beta\in\Aut(H)$, we have that $x^\beta$ acts like $x^t$ for some $t$ coprime to $p$; specifically, $t$ is determined by $(xH')^\beta=x^tH'$. Furthermore, $\beta$ induces an automorphism of $S$ that commutes with $x$ because $\Aut(S)$ is abelian. Thus, we have \[ (y^\beta)^{x} = (y^x)^\beta = (y^\beta)^{x^\beta}= (y^\beta)^{x^t};\]
since $y\mapsto y^x$ has order $p^{c_w}$, it follows that $t\equiv 1\bmod p^{c_w}$, and so $\Aut(H)$ acts trivially on $\Hom(P,\Aut(S))$. This holds for all primes that $p$ acts on in $H$, so  $t\equiv 1\bmod p^c$, where $c$ is the largest exponent among those  $p$-actions. We make a case distinction on the exponent in $(p,p_v,e)$.

By what is shown in the previous paragraph, if $e \leq c$, then every action $\sigma\colon H\to\Aut(Q)$ with $p$ acting with exponent $e$ satisfies
\[ \sigma^{\beta^{-1}}(x)  = \sigma(x^\beta) = \sigma(x^t) = \sigma(x)^t = \sigma(x),\]
and so the $\Aut(H)$-action on $\Hom(P, \Aut(Q))$ is trivial. Thus, there are $p^{e-1}(p-1)$ orbits of $\Aut(H)$ on actions $P\to\Aut(Q)$ with exponent $e$.

It remains to consider  $e>c$. If $\sigma\colon P\to\Aut(Q)$ with $\sigma(x)$ of order $p^e$ and $\beta\in\Aut(H)$ with $\beta(xH')=x^tH'$ as above, then $\sigma^{\beta^{-1}}(x)  = \sigma(x^\beta) = \sigma(x^t) = \sigma(x)^{t \bmod p^e}$. Thus, every $P$-action on $Q$ in the same $\Aut(H)$-orbit as $\sigma$ has the form $\sigma^s$ for some $s\in\{1,\ldots,p^{e}-1\}$ with $s\equiv 1\bmod p^c$. Let $S=P_1$, $S_2,\ldots,S_{v-1}$ be Sylow subgroups of $H$ corresponding to $p_1,\ldots,p_{v-1}$. We claim that for each such value $s$ there is an automorphism $\gamma$ of $H$ that takes $x$ to $x^s$ and fixes $S_2,\ldots,S_{v-1}$. Note that $H$ is generated by $S_1,\ldots,S_{v-1}$, so to define $\gamma$ it suffices to describe how it acts on the cyclic generators $x,z_2,\ldots,z_{v-1}$ of these  subgroups.  By von Dyck's Theorem 
\cite[Theorem~2.53]{handbook}, the map $\gamma\colon (x,z_2,\ldots,z_{v-1})\mapsto (x^s,z_2,\ldots,z_{v-1})$ extends to an automorphism of $H$ if and only if the conjugacy relations for these generators are preserved by $\gamma$. This is trivially true for all conjugacy relations between generators $z_i$ and $z_j$; likewise, if $z_i$ commutes with $x$, then $\gamma(z_f)\gamma(x)= \gamma(x)\gamma(z_f)$. Lastly, suppose $x$ acts non-trivially on $z_i$, say $z_i x=xz_i^j$. Because $p$ acts at most with exponent $c$ in $H$ and because $s\equiv 1\bmod p^c$, we have $z_f^{x^s} = z_f^{x}$, and so $\gamma(z_j)\gamma(x) = z_jx^s=x^sz_j^{x^s}=x^sz_j^{x}=x^sz_j^j= \gamma(x)\gamma(z_j)^j$, as required. This proves that  $\gamma$ extends to an automorphism of $H$ that maps $x$ to $x^s$ and fixes the other Sylow subgroups $S_2,\ldots,S_{v-1}$. As there are $p^{e-c}$ such numbers $s$, the $\Aut(H)$-orbit of the above $\sigma\colon P\to\Aut(Q)$ has size $p^{e-c}$. Note that there are $p^{e-1}(p-1)$ elements in $\Aut(Q)$ of order $p^e$, thus in the case discussed in this paragraph ($e>c$) the number of distinct $\Aut(H)$-orbits of actions of $P$ on $Q$ with exponent $e$ is $p^{e-1}(p-1)/p^{e-c} =  p^{c-1}(p-1)$.

In  conclusion, we can determine the canonical actions $\sigma\colon H\to\Aut(Q)$ by considering each prime $p\in\mathcal{A}$ separately; this provides a partial converse to the last statement in Remark~\ref{remInduced}. As a corollary, our description also leads to an immediate counting procedure for  the number of isomorphism types of groups of order $n$ that have cluster $\mathcal{P}$.

\subsection{C-groups by ID}\label{secConstID}
We sort the C-groups of order $n>1$ first by their acting divisors (in increasing order), then by the size of their acting groups (in increasing order). Thus, to construct the C-group $G$ with ID $(n,i)$, we proceed as follows. First, we determine  $j\geq 0$ such that \[C_{d_1}(n)+\ldots+C_{d_{j-1}}(n)<i\leq C_{d_1}(n)+\ldots+C_{d_{j}}(n),\] so $G$ must have acting divisor $d=d_j$. Now it remains to construct the $t$-th group with acting divisor $d$, where  $t=i-C_{d_1}(n)-\ldots-C_{d_{j-1}}(n)$. If  we have prime-power factorisations $d=p_1^{a_1}\ldots p_u^{a_u}$ and $n/d=q_1^{b_1}\ldots q_v^{b_v}$, then every acting group order $m$ is divisible by $p_1\ldots p_u$ and divides $\gcd(d,(q_1-1)\ldots (q_v-1))$. Let $m_1<\ldots<m_r$ be all those possible acting group orders and  determine  $j\geq 1$ such that \[C_{d,m_1}(n)+\ldots +C_{d,m_{j-1}}(n)<t\leq C_{d,m_1}(n)+\ldots +C_{d,m_j}(n),\] so $G$ must have an acting group of order $m=m_j$. Now it remains to construct the $\ell$-th group with acting divisor $d$ and acting group of order $m$, where $\ell=t-C_{d,m_1}(n)-\ldots -C_{d,m_{j-1}}(n)$. Let $\mathcal{P}_1,\ldots,\mathcal{P}_w$ be all the possible clusters for a C-group of size $n$, with acting divisor $d$ and acting group of order $m$, sorted as described in Section \ref{secSortCluster}, and  let $C_{\mathcal{P}_c}(n)$ denote the number of C-groups of order $n$ with cluster $\mathcal{P}_c$; as described in Section \ref{secGenP}, this number can be calculated from $\mathcal{P}_c$.  We determine $j\geq 1$ such that 
\[C_{\mathcal{P}_1}(n) +\ldots+C_{\mathcal{P}_{j-1}}(n) < \ell \leq C_{\mathcal{P}_1}(n) + \ldots + C_{\mathcal{P}_{j}}(n),\] so $G$ must have cluster $\mathcal{P}=\mathcal{P}_j$. Now it remains to construct the $s$-th group of order $n$ with this cluster, where $s = \ell - C_{\mathcal{P}_1}(n) - \ldots - C_{\mathcal{P}_{j-1}}(n-1)$. For each triple in $\mathcal{P}$, the description in Section \ref{secGenP} tells us how many non-isomorphic choices of actions there are for $p$ acting on $q$ with exponent $e$.  Thus, with $\mathcal{P}$ sorted first by the acting primes then by the primes which are acted on, we can determine the corresponding action for each triple; the description in Remark \ref{remPowers} makes this canonical, hence the group with ID $(n,i)$ is determined.

\subsection{Identification of C-groups}\label{secId}
\noindent For a given C-group $G$ of order $n$, we determine its ID $(n,i)$ such that $G$ is isomorphic to the group with ID $(n,i)$ as constructed in Section \ref{secConstID}. For this, we compute the Sylow subgroups of $G$, and from those determine the  cluster $\clust(G)$. This also provides the decomposition $G=(A\ltimes D)\times C$ as in Lemma \ref{lemDivCond}, the acting divisor $d=|A|$, the acting group $K\leq \Aut(D)$ and its order $m=|K|$. If $d_1<\ldots<d_j=d$ are the possible acting divisors at most  $d$, and $m_1<\ldots< m_r=m$ are the possible acting group orders (of a C-group of order $n$ with acting divisor $d$) at most $m$, then the ID $(n,i)$ of $G$ satisfies $C<i\leq C+C_{d,m}(n)$ where 
\[C=C_{d_1}(n)+\ldots+C_{d_j-1}(n)+C_{d,m_1}(n)+\ldots+C_{d,m_{r-1}}(n).\] 
It remains to determine the position of $G$ within the clusters for $C_{d,m}(n)$; those clusters are sorted (Section \ref{secSortCluster}) and for each cluster we know how many isomorphism types of group exist (Section~\ref{secGenP}). The ID of $G$ is now determined  by reversing the normalisation process  in Section~\ref{secGenP}.

\section{Practical performance}\label{secGAP} 
\noindent A GAP implementation of our algorithms  is  available online (see \cite{code}) and provides functions {\tt AllCGroups}, {\tt CGroupById}, and {\tt IdCGroup};  we comment on the performance below. All computations have been carried out with GAP 4.10.0 on a computer with 16GB RAM and Intel(R) Core(TM) i5-7500 CPU@3.40GHz. We checked the correctness  against GAP's SmallGroups Database and a brute-force implementation based on Remark \ref{remBF}. All groups we considered were constructed or given in GAP as PC-groups (polycyclic groups).

\subsection{Squarefree groups} Since every group of squarefree order is a C-group, we first compare our implementation against the inbuilt GAP functions {\tt AllSmallGroups} and {\tt IdSmallGroup}. Up to order 250000 there are 566801 isomorphism types of groups of squarefree order. Our implementation of {\tt AllCGroups} constructed these groups in 201 seconds; {\tt AllSmallGroups} required  2844 seconds. Table \ref{tabSQ} lists some runtimes for some selected squarefree orders that involve larger prime factors (some greater than $10^6$). Overall, our experiments indicate that {\tt AllCGroups} is often  a factor ten faster than GAP's {\tt AllSmallGroups}.

{\tt AllCGroups} and {\tt AllSmallGroups} constructed the 208014 groups of squarefree order at most 100000 in 75 seconds and in 536 seconds, respectively. Taking GAP's list of those groups, our  {\tt IdCGroup} needed 161 seconds to compute all their IDs, whereas GAP's {\tt IdSmallGroup} required 267 seconds to compute the IDs of our groups. We note that due to different orderings of groups, our ID is not the same as the ID in the SmallGroups Library.

Table \ref{tabSQ} lists some further runtimes. In general, the performance of {\tt IdCGroup} seems  comparable with GAP's {\tt IdSmallGroup}; for both functions, when group orders with large prime divisors are involved, a main bottleneck seems to be GAP's functionality to compute Sylow subgroups. We have not included runtimes in Table~\ref{tabSQ} when the impact of the latter dominated the ID computation, but rather wrote `syl'.

\subsection{General orders} There are 576093 isomorphism types of C-groups of order at most 100000. It took 247 seconds to construct all these groups with {\tt AllCGroups}. Applying {\tt IdCGroup} to (isomorphic copies of) these groups took 846 seconds, that is, on average  0.0015 seconds per group. In Table \ref{tab2} we list runtimes for constructing all C-groups of  larger orders, and for determining their IDs; the average runtime for {\tt IdCGroup} was at most 0.013 seconds per group.

\newcommand{\mmc}[1]{\multicolumn{1}{|r|}{#1}}
\newcommand{\mmcc}[1]{\multicolumn{1}{r|}{#1}}

\renewcommand\arraystretch{1.2}
\begin{table}[h]
  {
  \hspace*{-2.2cm}\begin{tabular}{rrrrrr}\label{tabSQ}  
    squarefree group order & \# groups & \rot{{\tt AllCGroups}} & \rot{{\tt AllSmallGroups}} & \rot{{\tt IdCGroup}} & \rot{{\tt IdSmallGroup}}\\\hline
    4140806021907601450474046095 &  \mmc{126} & 0.1 & \mmcc{1.4} & 51.8 & 61.3\\
    1054578325689038795758113    & \mmc{299} & 0.1 & \mmcc{1.6} & {121} & 156\\
   18246294181628283634185      & \mmc{1678} & 0.6 & \mmcc{2.7} & {80.3}  & 95.2  \\ 
   288580601323668153539638920527445 & \mmc{110} & 0.06 & \mmcc{6.5} &syl & syl\\
    4100698523844820373769891971054 & \mmc{1024} & 0.55 & \mmcc{15.6} & syl & syl \\
    24898143467617960290            & \mmc{384} & 0.2 & \mmcc{175.7}  & syl  & syl  \\
     2533036924228662499419966    & \mmc{3840} & 1.9 & \mmcc{49.8} &  syl & syl   
  \end{tabular}\\[1ex] \caption{Some squarefree orders with large prime factors: we list the time (in seconds) required to construct and recognise \emph{all} groups of the given order.}
  
  \begin{tabular}{rrrr}\label{tab2}
    factorised group order & \# groups & \rot{{\tt AllCGroups}} &  \rot{{\tt IdCGroup}} \\\hline
$5^5. 7^5. 11^5. 13^5. 197^7. 251^4. 677^8. 727^4$ & \mmc{225} & 0.9 & 9.9 \\    
$2^2.  31^2.  113^3.  227^4.  293^4.  373$    & \mmc{276} & 0.3 &  1.9   \\
$2^5.  3^5.  101^3.  103.  313^2.  367^5$      & \mmc{840} & 1.3 & 8.5   \\
$2^3.  173^2.  233^4.  241^2.  307^2.  337^2$  & \mmc{1168} & 1.1 &  10.0   \\
$3^3.   5^3.   7^2.   11^3.   23^2.   43^2.   101^2.   127^2$ & \mmc{1305} & 0.9 & 6.3      \\
$2^4.  5^4.  73.  101^2.  113^3.  349^3$   & \mmc{2720} & 2.1 & 17.5      \\
$2^2.  3^5.  5.  61^2.  73^5.  349^4$      & \mmc{4128} & 5.1 &  38.2     \\
$3^3.  5^2.  7^3.  29^3. 59^2. 233^3. 43^3. 173^3. 431^2$ & \mmc{6006} & 4.6 & 73.9  
  \end{tabular}\\[1ex] \caption{Some non-squarefree orders: we list the time (in seconds) required to construct all C-groups of the given order and to recognise \emph{all} those groups.}}
\end{table}

\bibliographystyle{line}

\end{document}